\newtheorem{thm}{Theorem}
\newtheorem{lem}[thm]{Lemma}
\newtheorem{rem}[thm]{Remark}
\newcommand{\icomp}{\mathtt{i}}
\newcommand{\rd}{\,\mathrm{d}}
\newcommand{\bsx}{\boldsymbol{x}}
\newcommand{\bsy}{\boldsymbol{y}}
\newcommand{\bst}{\boldsymbol{t}}
\newcommand{\bsu}{\boldsymbol{u}}
\newcommand{\bsv}{\boldsymbol{v}}
\newcommand{\bsh}{\boldsymbol{h}}
\newcommand{\bszero}{\boldsymbol{0}}
\newcommand{\RR}{\mathbb{R}}
\newcommand{\FF}{\mathbb{F}}
\newcommand{\NN}{\mathbb{N}}
\newcommand{\ZZ}{\mathbb{Z}}
\newcommand{\cP}{\mathcal{P}}
\newcommand{\cS}{\mathcal{S}}
\begin{document}
\title{Exact order of extreme $L_p$ discrepancy of infinite sequences in arbitrary dimension} 

\author{Ralph Kritzinger and Friedrich Pillichshammer\thanks{The first author is supported by the Austrian Science Fund (FWF), Project F5509-N26, which is a part of the Special Research Program ``Quasi-Monte Carlo Methods: Theory and Applications''.}}

\date{}

\maketitle

\begin{abstract}
We study the extreme $L_p$ discrepancy of infinite sequences in the $d$-dimensional unit cube, which uses arbitrary sub-intervals of the unit cube as test sets. This is in contrast to the classical star $L_p$ discrepancy, which uses exclusively intervals that are anchored in the origin as test sets. We show that for any dimension $d$ and any $p>1$ the extreme $L_p$ discrepancy of every infinite sequence in $[0,1)^d$ is at least of order of magnitude $(\log N)^{d/2}$, where $N$ is the number of considered initial terms of the sequence. For $p \in (1,\infty)$ this order of magnitude is best possible.
\end{abstract}

\centerline{\begin{minipage}[hc]{130mm}{
{\em Keywords:} extreme $L_p$-discrepancy, lower bounds, van der Corput sequence\\
{\em MSC 2020:} 11K38, 11K06, 11K31}
\end{minipage}} 

\section{Introduction}

Let $\cP=\{\bsx_0,\bsx_1,\ldots,\bsx_{N-1}\}$ be an arbitrary $N$-element point set in the $d$-dimensional unit cube $[0,1)^d$.  For any measurable subset $B$ of $[0,1]^d$ the {\it counting function} $$A_N(B,\cP):=|\{n \in \{0,1,\ldots,N-1\} \ : \ \bsx_n \in B\}|$$ counts the number of elements from $\cP$ that belong to the set $B$. The {\it local discrepancy} of $\cP$ with respect to a given measurable ``test set'' $B$  is then given by $$\Delta_N(B,\cP):=A_N(B,\cP)-N \lambda (B),$$ where $\lambda$ denotes the Lebesgue measure of $B$. A global discrepancy measure is then obtained by considering a norm of the local discrepancy with respect to a fixed class of test sets. 

In the following let $p \in [1,\infty)$.

The classical {\it (star) $L_p$ discrepancy} uses as test sets the class of axis-parallel rectangles contained in the unit cube that are anchored in the origin. The formal definition is 
$$ L_{p,N}^{{\rm star}}(\cP):=\left(\int_{[0,1]^d}\left|\Delta_N([\bszero,\bst),\cP)\right|^p\rd \bst\right)^{1/p}, $$
where for $\bst=(t_1,t_2,\ldots,t_d)\in [0,1]^d$ we set $[\bszero,\bst)=[0,t_1)\times [0,t_2)\times \ldots \times [0,t_d)$ with area $\lambda([\bszero,\bst))=t_1t_2\cdots t_d$.

The {\it extreme $L_p$ discrepancy} uses as test sets arbitrary axis-parallel rectangles contained in the unit cube. For $\bsu=(u_1,u_2,\ldots,u_d)$ and $\bsv=(v_1,v_2,\ldots,v_d)$ in $[0,1]^d$ and $\bsu \leq \bsv$ let $[\bsu,\bsv)=[u_1,v_1)\times [u_2,v_2) \times \ldots \times [u_d,v_d)$, where $\bsu \leq \bsv$ means $u_j\leq v_j$ for all $j \in \{1,2,\ldots,d\}$. Obviously, $\lambda([\bsu,\bsv))=\prod_{j=1}^d (v_j-u_j)$. The extreme $L_p$ discrepancy of $\cP$ is then defined as
$$L_{p,N}^{\mathrm{extr}}(\cP):=\left(\int_{[0,1]^d}\int_{[0,1]^d,\, \bsu\leq \bsv}\left|\Delta_N([\bsu,\bsv),\cP)\right|^p\rd \bsu\rd\bsv\right)^{1/p}.  $$
Note that the only difference between standard and extreme $L_p$ discrepancy is the use of anchored and arbitrary rectangles in $[0,1]^d$, respectively. 

For an infinite sequence $\cS_d$ in $[0,1)^d$ the star and the extreme $L_p$ discrepancies $L_{p,N}^{\bullet}(\cS_d)$ are defined as $L_{p,N}^{\bullet}(\cP_{d,N})$, $N \in \NN$, of the point set $\cP_{d,N}$ consisting of the initial $N$ terms of $\cS_d$, where $\bullet \in \{{\rm star},{\rm extr}\}$. 

Of course, with the usual adaptions the above definitions can be extended also to the case $p=\infty$. However, it is well known that in this case the star and extreme $L_{\infty}$ discrepancies are equivalent in the sense that $L_{\infty,N}^{{\rm star}}(\cP) \le L_{\infty,N}^{{\rm extr}}(\cP) \le 2^d L_{\infty,N}^{{\rm star}}(\cP)$ for every $N$-element point set $\cP$ in $[0,1)^d$.  For this reason we restrict the following discussion to the case of finite $p$.

Recently it has been shown that the extreme $L_p$ discrepancy is dominated -- up to a multiplicative factor that depends only on $p$ and $d$ -- by the star $L_p$ discrepancy (see \cite[Corollary~5]{KP21}), i.e., for every $d \in \NN$ and $p \in [1,\infty)$ there exists a positive quantity $c_{d,p}$ such that for every $N \in \NN$ and every $N$-element point set in $[0,1)^d$ we have 
\begin{equation}\label{monLpstex}
L_{p,N}^{{\rm extr}}(\cP)\le c_{d,p} L_{p,N}^{{\rm star}}(\cP). 
\end{equation}
For $p=2$ we even have $c_{d,2}=1$ for all $d \in \NN$; see \cite[Theorem~5]{HKP20}. A corresponding estimate the other way round is in general not possible (see \cite[Section~3]{HKP20}). So, in general, the star and the extreme $L_p$ discrepancy for $p< \infty$ are not equivalent, which is in contrast to the $L_{\infty}$-case.

\paragraph{Bounds for finite point sets.} For finite point sets the order of magnitude of the considered discrepancies is more or less known. For every $p \in(1,\infty)$  and $d \in \NN$ there exists a $c_{p,d}>0$ such that for every finite $N$-element point set $\cP$ in $[0,1)^d$ with $N \ge 2$ we have 
\begin{equation*}
L_{p,N}^{\bullet}(\cP) \ge c_{p,d} (\log N)^{\frac{d-1}{2}}, \quad \mbox{where } \bullet \in \{{\rm star},{\rm extr}\}.
\end{equation*}
For the star $L_p$ discrepancy the result for $p \ge 2$ is a celebrated result by Roth~\cite{Roth} from 1954 that was extended later by Schmidt~\cite{S77} to the case $p \in (1,2)$. For the extreme $L_p$ discrepancy the result for $p \ge 2$ was first given in \cite[Theorem~6]{HKP20} and for $p \in (1,2)$ in \cite{KP21}. Hal\'{a}sz~\cite{H81} for the star discrepancy and the authors  \cite{KP21} for the extreme discrepancy proved that the lower bound is even true for $p=1$ and $d=2$, i.e.,  there exists a positive number $c_{1,2}$ with the following property:  for every $N$-element $\cP$ in $[0,1)^2$ with $N \ge 2$ we have 
\begin{equation}\label{lbdl1D2dipts}
L_{1,N}^{\bullet}(\cP) \ge c_{1,2} \sqrt{\log N}, \quad \mbox{where } \bullet \in \{{\rm star},{\rm extr}\}.
\end{equation}

On the other hand, it is known that for every $d,N \in \NN$ and every $p \in [1,\infty)$ there exist $N$-element point sets $\cP$ in $[0,1)^d$ such that 
\begin{equation}\label{uplpps}
L_{p,N}^{{\rm star}}(\cP) \lesssim_{d,p} (\log N)^{\frac{d-1}{2}}.
\end{equation}
(For $f,g: D \subseteq \NN \rightarrow \RR^+$ we write $f(N) \lesssim g(N)$, if there exists a positive number $C$ such that $f(N) \le C g(N)$ for all $N \in D$. Possible implied dependencies of $C$ are indicated as lower indices of the $\lesssim$ symbol.) 

Due to \eqref{monLpstex} the upper bound \eqref{uplpps} even applies to the extreme $L_p$ discrepancy. Hence, for $p \in (1,\infty)$ and arbitrary $d \in \NN$ (and also for $p=1$ and $d=2$) we have matching lower and upper bounds. The result in \eqref{uplpps} was proved by Davenport~\cite{D56} for $p= 2$, $d= 2$, by Roth~\cite{R80} for $p= 2$ and arbitrary $d$ and finally by Chen~\cite{C80} in the general case. Other proofs were found by Frolov~\cite{Frolov}, Chen~\cite{C83}, Dobrovol'ski{\u\i}~\cite{Do84}, Skriganov~\cite{Skr89, Skr94}, Hickernell and Yue~\cite{HY00}, and Dick and Pillichshammer~\cite{DP05b}. For more details on the early history of the subject see the monograph \cite{BC}. Apart from Davenport, who gave an explicit construction in dimension $d=2$, these results are pure existence results and explicit constructions of point sets were not known until the beginning of this millennium. First explicit constructions of point sets with optimal order of star $L_2$ discrepancy in arbitrary dimensions have been provided in 2002 by Chen and Skriganov \cite{CS02} for $p=2$ and in 2006 by Skriganov \cite{S06} for general $p$. Other explicit constructions are due to Dick and Pillichshammer \cite{DP14a} for $p=2$, and Dick \cite{D14} and Markhasin \cite{M15} for general $p$.

\paragraph{Bounds for infinite sequences.} For the star $L_p$ discrepancy the situation is also more or less clear. Using a method from Pro{\u\i}nov~\cite{pro86} (see also \cite{DP14b}) the results about lower bounds on star $L_p$ discrepancy for finite sequences can be transferred to the following lower bounds for infinite sequences: for every $p \in(1,\infty]$ and every $d \in \NN$ there exists a $C_{p,d}>0$ such that for every infinite sequence $\cS_d$ in $[0,1)^d$  
\begin{equation}\label{lbdlpdiseq}
L_{p,N}^{{\rm star}}(\cS_d) \ge C_{p,d} (\log N)^{d/2} \ \ \ \ \mbox{for infinitely many $N \in \NN$.}
\end{equation}
For $d=1$ the result holds also for the case $p=1$, i.e., for every $\cS$ in $[0,1)$ we have
\begin{equation*}
L_{1,N}^{{\rm star}}(\cS) \ge C_{1,1} \sqrt{\log N} \ \ \ \ \mbox{for infinitely many $N \in \NN$.}
\end{equation*}
Matching upper bounds on the star $L_p$ discrepancy of infinite sequences have been shown in \cite{DP14a} (for $p=2$) and in \cite{DHMP} (for general $p$). For every $d \in \NN$ there exist infinite sequences $\cS_d$ in $[0,1)^d$ such that for any $p \in [1,\infty)$ we have $$L_{p,N}^{{\rm star}}(\cS_d) \lesssim_{d,p} (\log N)^{d/2} \quad \mbox{ for all $N \in \NN$.}$$

So far, the extreme $L_p$ discrepancy of infinite sequences has not yet been studied. Obviously,  due to \eqref{monLpstex} the upper bounds on the star $L_p$ discrepancy also apply to the extreme $L_p$ discrepancy. However, a similar reasoning for obtaining a lower bound is not possible.  In this paper we  show that the lower bound \eqref{lbdlpdiseq} also holds true for the extreme $L_p$ discrepancy. Thereby we prove that for fixed dimension $d$ and for $p\in (1,\infty)$ the minimal extreme $L_p$ discrepancy is, like the star $L_p$ discrepancy, of exact order of magnitude $(\log N)^{d/2}$ when $N$ tends to infinity. 

\section{The result}

We extend the lower bound \eqref{lbdlpdiseq} for the star $L_p$ discrepancy of infinite sequences to extreme $L_p$ discrepancy.

\begin{thm}\label{thm2}
For every dimension $d \in \NN$ and any $p>1$ there exists a real $\alpha_{d,p} >0$ with the following property:  For any infinite sequence $\cS_d$ in $[0,1)^d$ we have $$L_{p,N}^{{\rm extr}}(\cS_{d}) \ge \alpha_{d,p} (\log N)^{d/2} \ \ \ \ \mbox{ for infinitely many }\ N \in \NN.$$ For $d=1$ the results even holds true for the case $p=1$.
\end{thm}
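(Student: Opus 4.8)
The plan is to adapt the method of Pro{\u\i}nov, which transfers discrepancy lower bounds for finite point sets in dimension $d+1$ to lower bounds for infinite sequences in dimension $d$. The decisive feature is that this reduction preserves the \emph{type} of the discrepancy: it converts the extreme $L_p$ discrepancy of a suitable $(d+1)$-dimensional finite point set into the extreme $L_p$ discrepancies of the initial segments of the sequence. This is precisely what is needed here, because the monotonicity \eqref{monLpstex} goes the wrong way to derive a lower bound for the extreme discrepancy from the (already known) lower bound \eqref{lbdlpdiseq} for the star discrepancy.

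To the sequence $\cS_d=(\bsx_0,\bsx_1,\ldots)$ and each $N \in \NN$ I associate the finite point set $\cP_N=\{(\bsx_n,n/N) \,:\, 0 \le n < N\}$ in $[0,1)^{d+1}$, using the normalized index as an additional ``time'' coordinate. For a test box $[\bsu,\bsv)\times[a,b)$ the membership condition $a \le n/N < b$ selects exactly the indices $\lceil aN\rceil \le n < \lceil bN\rceil$, so the counting function telescopes into a difference of counting functions of two initial segments. Writing $k=\lceil aN\rceil$, $l=\lceil bN\rceil$, substituting $A_m=\Delta_m+m\lambda$, and using $|l-k-N(b-a)|<1$ together with $\lambda([\bsu,\bsv)) \le 1$, one obtains the pointwise estimate
$$\left|\Delta_N([\bsu,\bsv)\times[a,b),\cP_N)\right| \le \left|\Delta_l([\bsu,\bsv),\cP_{d,l})\right| + \left|\Delta_k([\bsu,\bsv),\cP_{d,k})\right| + 1.$$

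Next I take $L_p$ norms over the two corners $(\bsu,a)\le(\bsv,b)$ in $[0,1]^{d+1}$ and apply Minkowski's inequality, so that the three summands separate. In the first summand $l=\lceil bN\rceil$ does not depend on $a$, so integrating out the redundant variable costs at most a factor $1$; and as $b$ sweeps $(0,1]$ the value $\lceil bN\rceil$ runs through $\{1,\ldots,N\}$, each attained on an interval of length $1/N$. Hence this summand is bounded by $(\frac1N\sum_{m=1}^N (L_{p,m}^{\rm extr}(\cS_d))^p)^{1/p}$, and symmetrically for the $\Delta_k$-summand, while the constant contributes only $O(1)$. This gives the Pro{\u\i}nov-type inequality
$$L_{p,N}^{\rm extr}(\cP_N) \le 2\left(\frac1N\sum_{m=1}^N (L_{p,m}^{\rm extr}(\cS_d))^p\right)^{1/p} + O(1).$$
Now I invoke the finite-point-set lower bound for the extreme $L_p$ discrepancy in dimension $d+1$, namely $L_{p,N}^{\rm extr}(\cP_N) \gtrsim_{p,d} (\log N)^{((d+1)-1)/2}=(\log N)^{d/2}$ for $p>1$ (and, when $d+1=2$, the case $p=1$ supplied by \eqref{lbdl1D2dipts}). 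Combining the two displays and absorbing the $O(1)$ for large $N$ yields $\frac1N\sum_{m=1}^N (L_{p,m}^{\rm extr}(\cS_d))^p \gtrsim_{p,d} (\log N)^{dp/2}$. A final averaging argument then concludes: if $L_{p,m}^{\rm extr}(\cS_d) < \alpha (\log m)^{d/2}$ held for all large $m$, the Ces\`aro mean on the left would be at most $\alpha^p(\log N)^{dp/2}(1+o(1))$, which for $\alpha$ small enough contradicts the lower bound; hence $L_{p,N}^{\rm extr}(\cS_d)\ge \alpha_{d,p}(\log N)^{d/2}$ for infinitely many $N$.

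I expect the main obstacle to lie in the rigorous proof of the Pro{\u\i}nov-type inequality. One has to carry the integer roundings $\lceil aN\rceil,\lceil bN\rceil$ and the resulting bounded error term through the $L_p$ norm, and verify that integrating out the time coordinate genuinely reproduces the Ces\`aro average of the $p$-th powers of the $d$-dimensional extreme discrepancies. It is at this step --- rather than in the quoted finite lower bound or the elementary concluding average --- that the extreme (as opposed to star) nature of the admissible test sets must be respected, since the boxes $[\bsu,\bsv)$ arising in the reduction are arbitrary sub-intervals and not merely anchored ones.
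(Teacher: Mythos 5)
Your proposal is correct and takes essentially the same approach as the paper: the Pro{\u\i}nov--Roth lifting of the sequence to the $(d+1)$-dimensional point set with points $(\bsy_k,k/N)$, the same telescoping estimate for the local discrepancy of the lifted set, Minkowski's inequality to separate the terms, and the same finite-point-set lower bound for the extreme $L_p$ discrepancy in dimension $d+1$ from \cite{KP21}. The only difference is bookkeeping: the paper bounds the time-coordinate integrals by $\max_{n\le N}L_{p,n}^{{\rm extr}}(\cS_d)$ and derives ``infinitely many'' via a largest-counterexample contradiction, whereas you retain the exact Ces\`aro average $\bigl(\tfrac1N\sum_{m\le N}(L_{p,m}^{{\rm extr}}(\cS_d))^p\bigr)^{1/p}$ and conclude by an averaging contradiction --- both are valid.
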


For the proof we require the following lemma. For the usual star discrepancy this lemma goes back to Roth~\cite{Roth}. We require a similar result for the extreme discrepancy.

\begin{lem}\label{le1}
For $d\in \NN$ let $\cS_d=(\bsy_k)_{k\ge 0}$, where $\bsy_k=(y_{1,k},\ldots,y_{d,k})$ for $k \in \NN_0$, be an arbitrary sequence in the $d$-dimensional unit cube with extreme $L_p$ discrepancy $L_{p,N}^{{\rm extr}}(\cS_d)$ for $p \in [1,\infty]$. Then for every $N\in \NN$ there exists an $n \in \{1,2,\ldots,N\}$ such that $$L_{p,n}^{{\rm extr}}(\cS_d) \ge \frac{2^{1/p}}{2}\, L_{p,N}^{{\rm extr}}(\cP_{N,d+1})- \frac{1}{2^{d/p}},$$ with the adaption that $2^{1/p}$ and $2^{d/p}$ have to be set 1 if $p =\infty$, where $\cP_{N,d+1}$ is the finite point set in $[0,1)^{d+1}$ consisting of the $N$ points $$\bsx_k=(y_{1,k},\ldots,y_{d,k},k/N) \ \ \mbox{ for }\ k \in \{0,1,\ldots ,N-1\}.$$ 
\end{lem}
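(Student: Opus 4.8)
The plan is to express the $(d+1)$-dimensional local discrepancy of $\cP_{N,d+1}$ through the $d$-dimensional local discrepancies of the truncations $\cP_{n,d}$ consisting of the first $n$ terms of $\cS_d$, and to check that integrating out the extra coordinate costs only a bounded multiplicative factor plus a small additive error. For $\bsu,\bsv\in[0,1]^{d+1}$ write $\bsu=(\bar{\bsu},u_{d+1})$ and $\bsv=(\bar{\bsv},v_{d+1})$ with $\bar{\bsu},\bar{\bsv}\in[0,1]^d$, and for $u_{d+1}\le v_{d+1}$ put $a=\lceil Nu_{d+1}\rceil$ and $b=\lceil Nv_{d+1}\rceil$; note $b\le N$ since $v_{d+1}\le1$, and $a\ge1$ for almost every $u_{d+1}$. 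Since the last coordinate of $\bsx_k$ equals $k/N$, the constraint $u_{d+1}\le k/N<v_{d+1}$ selects exactly the block of indices $k\in\{a,a+1,\ldots,b-1\}$, hence $A_N([\bsu,\bsv),\cP_{N,d+1})=A_b([\bar{\bsu},\bar{\bsv}),\cP_{b,d})-A_a([\bar{\bsu},\bar{\bsv}),\cP_{a,d})$. Using $\lambda([\bsu,\bsv))=\lambda([\bar{\bsu},\bar{\bsv}))(v_{d+1}-u_{d+1})$ and inserting the terms $\pm b\,\lambda([\bar{\bsu},\bar{\bsv}))$ and $\pm a\,\lambda([\bar{\bsu},\bar{\bsv}))$, I would arrive at the exact identity
\[
\Delta_N([\bsu,\bsv),\cP_{N,d+1})=\Delta_b([\bar{\bsu},\bar{\bsv}),\cP_{b,d})-\Delta_a([\bar{\bsu},\bar{\bsv}),\cP_{a,d})+E,
\]
with $E=\lambda([\bar{\bsu},\bar{\bsv}))\,((b-Nv_{d+1})-(a-Nu_{d+1}))$. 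The quantities $b-Nv_{d+1}$ and $a-Nu_{d+1}$ are the fractional gaps created by the ceilings, so both lie in $[0,1)$, and as $\lambda([\bar{\bsu},\bar{\bsv}))\le1$ we get $|E|\le1$.

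Write $D:=L_{p,N}^{\rm extr}(\cP_{N,d+1})$ and $D_n:=L_{p,n}^{\rm extr}(\cS_d)$, and let $\Omega$ denote the integration domain $\{\bar{\bsu}\le\bar{\bsv}\}\times\{u_{d+1}\le v_{d+1}\}$, of Lebesgue measure $2^{-(d+1)}$. Taking the norm $\|\cdot\|_{L_p(\Omega)}$ of the identity above and applying Minkowski's inequality yields $D\le\|\Delta_b\|_{L_p(\Omega)}+\|\Delta_a\|_{L_p(\Omega)}+\|E\|_{L_p(\Omega)}$. For the first term I would integrate the last coordinates first: the integrand is independent of $u_{d+1}$ and depends on $v_{d+1}$ only through $b$, so integrating $u_{d+1}$ over $[0,v_{d+1}]$ and decomposing $v_{d+1}$ into the cells $(\tfrac{b-1}{N},\tfrac{b}{N}]$, on which $b$ is constant, gives
\[
\|\Delta_b\|_{L_p(\Omega)}^p=\sum_{b=1}^{N}\frac{2b-1}{2N^2}\,D_b^{\,p},
\]
using $\int_{(b-1)/N}^{b/N}v_{d+1}\rd v_{d+1}=\tfrac{2b-1}{2N^2}$ and the fact that $\int_{\bar{\bsu}\le\bar{\bsv}}|\Delta_b([\bar{\bsu},\bar{\bsv}),\cP_{b,d})|^p\rd\bar{\bsu}\rd\bar{\bsv}=D_b^{\,p}$ is precisely the extreme $L_p$ discrepancy of the $b$-point set $\cP_{b,d}$. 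Since $\sum_{b=1}^N(2b-1)=N^2$, the weights sum to $\tfrac12$, so $\|\Delta_b\|_{L_p(\Omega)}\le 2^{-1/p}\max_{1\le n\le N}D_n$. The symmetric computation for $\Delta_a$ (integrating $v_{d+1}$ over $[u_{d+1},1]$, producing weights $\tfrac{2N-2a+1}{2N^2}$ that again sum to $\tfrac12$) gives $\|\Delta_a\|_{L_p(\Omega)}\le 2^{-1/p}\max_{1\le n\le N}D_n$, while $|E|\le1$ yields $\|E\|_{L_p(\Omega)}\le2^{-(d+1)/p}$.

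Combining the three bounds gives $D\le 2^{1-1/p}\max_{1\le n\le N}D_n+2^{-(d+1)/p}$, and solving for the maximum produces
\[
\max_{1\le n\le N}D_n\ge\frac{2^{1/p}}{2}\,D-\frac{1}{2}\,2^{-d/p}\ge\frac{2^{1/p}}{2}\,D-\frac{1}{2^{d/p}};
\]
choosing $n$ to be the index attaining the maximum then proves the lemma, and the case $p=\infty$ follows identically with $\|\cdot\|_{L_\infty(\Omega)}$ replacing the $L_p$-norm (so each cell-average becomes a maximum and all powers $2^{\pm1/p}$ become $1$). The conceptual crux, and the step I expect to demand the most care, is the block identity together with the observation that -- unlike in Roth's anchored (star) setting, where only one truncation $\cP_{n,d}$ appears -- the extreme discrepancy forces a \emph{difference} of two truncations $\cP_{b,d}$ and $\cP_{a,d}$. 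This doubling is exactly what yields the factor $2$ in the denominator; the remaining bookkeeping is the verification that integrating the linear weights $v_{d+1}$ and $1-u_{d+1}$ over the cells gives total mass exactly $\tfrac12$, and the crude but sufficient estimate $|E|\le1$.
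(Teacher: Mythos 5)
Your proof is correct and follows essentially the same route as the paper's: the same lifting identity $A_N(E,\cP_{N,d+1})=A_b(E',\cS_d)-A_a(E',\cS_d)$ obtained from the ceilings of $Nu_{d+1}$ and $Nv_{d+1}$, the same decomposition of $\Delta_N$ into two truncated discrepancies plus a bounded error, the triangle inequality in $L_p$ over the domain $\bsu\le\bsv$, and the measure $\tfrac12$ of the triangle $\{u_{d+1}\le v_{d+1}\}$ producing the factor $2^{-1/p}$. Your two refinements --- computing the exact cell weights $\tfrac{2b-1}{2N^2}$ where the paper simply bounds $L_{p,\overline{m}}^{\rm extr}(\cS_d)\le L_{p,n}^{\rm extr}(\cS_d)$ pointwise, and the sharper estimate $|E|\le 1$ where the paper uses $2$ (which halves the additive constant) --- are correct but inessential to the conclusion.
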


\begin{proof}
We present the proof for finite $p$. For $p=\infty$ the proof is similar.

Choose $n \in \{1,2,\ldots,N\}$ such that $$L_{p,n}^{{\rm extr}}(\cS_d) =\max_{k=1,2,\ldots,N} L_{p,k}^{{\rm extr}}(\cS_d).$$

Consider a sub-interval of the $(d+1)$-dimensional unit cube of the form $E=\prod_{i=1}^{d+1}[u_i,v_i)$ with $\bsu=(u_1,u_2,\ldots,u_{d+1}) \in [0,1)^{d+1}$ and $\bsv=(v_1,v_2,\ldots,v_{d+1}) \in [0,1)^{d+1}$ satisfying $\bsu\leq\bsv$. Put $\overline{m}=\overline{m}(v_{d+1}):=\lceil N v_{d+1}\rceil$ and $\underline{m}=\underline{m}(u_{d+1}):=\lceil N u_{d+1}\rceil$. Then a point $\bsx_k$, $k \in \{0,1,\ldots, N-1\}$, belongs to $E$, if and only if $\bsy_k \in \prod_{i=1}^d[u_i,v_i)$ and $N u_{d+1} \le k < N v_{d+1}$. Denoting $E'=\prod_{i=1}^d[u_i,v_i)$ we have $$A_N(E,\cP_{N,d+1})=A_{\overline{m}}(E',\cS_d)-A_{\underline{m}}(E',\cS_d)$$ and therefore
\begin{align*}
\Delta_N(E,\cP_{N,d+1}) = & A_N(E,\cP_{N,d+1}) -N  \prod_{i=1}^{d+1} (v_i - u_i)\\
= & A_{\overline{m}}(E',\cS_d)-A_{\underline{m}}(E',\cS_d) - \overline{m} \prod_{i=1}^d (v_i-u_i) + \underline{m} \prod_{i=1}^d (v_i-u_i)\\
& + \overline{m} \prod_{i=1}^d (v_i-u_i) - \underline{m} \prod_{i=1}^d (v_i-u_i) - N  \prod_{i=1}^{d+1} (v_i - u_i)\\
= & \Delta_{\overline{m}}(E',\cS_d) - \Delta_{\underline{m}}(E',\cS_d) + \left(\overline{m}-\underline{m}-N(v_{d+1}-u_{d+1})\right) \prod_{i=1}^d (v_i-u_i).
\end{align*}
We obviously have $|\overline{m}-N v_{d+1}| \le 1$, $|\underline{m}-N u_{d+1}| \le 1$ and $|\prod_{i=1}^d (v_i-u_i)| \le 1$. Hence $$|\Delta_N(E,\cP_{N,d+1})| \le |\Delta_{\overline{m}}(E',\cS_d)| + |\Delta_{\underline{m}}(E',\cS_d)| +2,$$
which yields
\begin{align*}
L_{p,N}^{{\rm extr}}(\cP_{N,d+1}) \le &  \left(\int_{[0,1]^{d+1}}\int_{[0,1]^{d+1},\, \bsu\leq \bsv}\Big| |\Delta_{\overline{m}}(E',\cS_d)| + |\Delta_{\underline{m}}(E',\cS_d)| +2\Big|^p\rd \bsu\rd\bsv\right)^{1/p}\\
\le & \left(\int_{[0,1]^{d+1}}\int_{[0,1]^{d+1},\, \bsu\leq \bsv} |\Delta_{\overline{m}}(E',\cS_d)|^p\rd \bsu\rd\bsv\right)^{1/p}\\
& + \left(\int_{[0,1]^{d+1}}\int_{[0,1]^{d+1},\, \bsu\leq \bsv} |\Delta_{\underline{m}}(E',\cS_d)|^p\rd \bsu\rd\bsv\right)^{1/p}\\
& + \left(\int_{[0,1]^{d+1}}\int_{[0,1]^{d+1},\, \bsu\leq \bsv} 2^p\rd \bsu\rd\bsv\right)^{1/p},
\end{align*}
where the last step easily follows from the triangle-inequality for the $L_p$-semi-norm.
For every $u_{d+1},v_{d+1} \in [0,1]$ we have $L_{\overline{m},p}^{{\rm extr}}(\cS_d) \le L_{n,p}^{{\rm extr}}(\cS_d)$ and $L_{\underline{m},p}^{{\rm extr}}(\cS_d) \le L_{n,p}^{{\rm extr}}(\cS_d)$, respectively.
Setting $\bsu'=(u_1,\dots,u_d)$ and $\bsv'=(v_1,\dots,v_d)$, we obtain 
\begin{eqnarray*}
\lefteqn{\left(\int_{[0,1]^{d+1}}\int_{[0,1]^{d+1},\, \bsu\leq \bsv} |\Delta_{\overline{m}}(E',\cS_d)|^p\rd \bsu\rd\bsv\right)^{1/p}}\\
& = & \left( \int_0^1 \int_{0,\, u_{d+1} \le v_{d+1}}^1 \int_{[0,1]^d}\int_{[0,1]^d,\, \bsu'\leq \bsv'} |\Delta_{\overline{m}}(E',\cS_d)|^p\rd \bsu'\rd\bsv' \rd u_{d+1} \rd v_{d+1}\right)^{1/p}\\
& = & \left( \int_0^1 \int_{0,\, u_{d+1} \le v_{d+1}}^1 (L_{\overline{m},p}^{{\rm extr}}(\cS_d))^p \rd u_{d+1} \rd v_{d+1}\right)^{1/p}\\
& \le & \left( \int_0^1 \int_{0,\, u_{d+1} \le v_{d+1}}^1 (L_{n,p}^{{\rm extr}}(\cS_d))^p \rd u_{d+1} \rd v_{d+1}\right)^{1/p}\\
& = & \frac{1}{2^{1/p}}\, L_{p,n}^{{\rm extr}}(\cS_d).
\end{eqnarray*}
Likewise we also have $$\left(\int_{[0,1]^{d+1}}\int_{[0,1]^{d+1},\, \bsu\leq \bsv} |\Delta_{\underline{m}}(E',\cS_d)|^p\rd \bsu\rd\bsv\right)^{1/p} \le \frac{1}{2^{1/p}}\, L_{p,n}^{{\rm extr}}(\cS_d).$$ Also $$\left(\int_{[0,1]^{d+1}}\int_{[0,1]^{d+1},\, \bsu\leq \bsv} 2^p\rd \bsu\rd\bsv\right)^{1/p}= \frac{2}{2^{(d+1)/p}}.$$ 

Therefore we obtain
$$L_{p,N}^{{\rm extr}}(\cP_{N,d+1}) \le \frac{2}{2^{1/p}}\, L_{p,n}^{{\rm extr}}(\cS_d) + \frac{2}{2^{(d+1)/p}}.$$
From here the result follows immediately.
\end{proof}

Now we can give the proof of Theorem~\ref{thm2}.

\begin{proof}[Proof of Theorem~\ref{thm2}]
We use the notation from Lemma~\ref{le1}. For the extreme $L_p$ discrepancy of the finite point set $\cP_{N,d+1}$ in $[0,1)^{d+1}$ we obtain from \cite[Corollary~4]{KP21} (for $d \in \NN$ and $p>1$) and \cite[Theorem~7]{KP21} (for $d=1$ and $p=1$) that $$L_{p,N}^{{\rm extr}}(\cP_{N,d+1}) \ge c_{d+1,q} (\log N)^{d/2}$$ for some real $c_{d+1,q}>0$ which is independent of $N$. Let $\alpha_{d,p} \in (0,2^{\frac{1}{p}-1}c_{d+1,p})$ and let $N \in \NN$ be large enough such that $$\frac{2^{1/p} c_{d+1,p}}{2} \, (\log N)^{d/2} -\frac{1}{2^{d/p}} \ge \alpha_{d,p} (\log N)^{d/2}.$$ According to Lemma~\ref{le1} there exists an $n \in \{1,2,\ldots,N\}$ such that
\begin{eqnarray}\label{eq1}
L_{p,n}^{{\rm extr}}(\cS_d) & \ge &  \frac{2^{1/p}}{2}\, L_{p,N}^{{\rm extr}}(\cP_{N,d+1})-\frac{1}{2^{d/p}}\nonumber\\
& \ge &  \frac{2^{1/p} c_{d+1,p}}{2} \, (\log N)^{d/2} -\frac{1}{2^{d/p}}\nonumber\\ 
& \ge & \alpha_{d,p} (\log n)^{d/2}.
\end{eqnarray}
Thus we have shown that for every large enough $N \in \NN$ there exists an $n \in \{1,2,\ldots,N\}$ such that
\begin{equation}\label{eq2}
L_{p,n}^{{\rm extr}}(\cS_d) \ge \alpha_{d,p} (\log n)^{d/2}.
\end{equation}
It remains to show that \eqref{eq2} holds for infinitely many $n \in \NN$. Assume on the contrary that \eqref{eq2} holds for finitely many $n \in \NN$ only and let $m$ be the largest integer with this property. Then choose $N \in \NN$ large enough such that $$\frac{2^{1/p} c_{d+1,p}}{2} \, (\log N)^{d/2} -\frac{1}{2^{d/p}} \ge \alpha_{d,p} (\log N)^{d/2} > \max_{k=1,2,\ldots,m} L_{p,k}^{{\rm extr}}(\cS_d).$$ For this $N$ we can find an $n \in \{1,2,\ldots,N\}$ for which \eqref{eq1} and \eqref{eq2} hold true. However, \eqref{eq1} implies that $n > m$ which leads to a contradiction since $m$ is the largest integer such that \eqref{eq2} is true. Thus we have shown that \eqref{eq2} holds for infinitely many $n \in \NN$ and this completes the proof.
\end{proof}

As already mentioned, there exist explicit constructions of infinite sequences $\cS_d$ in $[0,1)^d$ with the property that  
\begin{align}\label{ub:extrlpinf}
 L_{p,N}^{{\rm extr}}(\cS_d) \lesssim_{p,d} (\log N)^{d/2}\ \ \ \ \mbox{ for all $N\ge 2$ and all $p \in [1,\infty)$.}
\end{align}
This result follows from \eqref{monLpstex} together with \cite[Theorem~1.1]{DHMP}. These explicitly constructed sequences are so-called order 2 digital $(t, d)$-sequence over the finite field $\mathbb{F}_2$; see \cite[Section~2.2]{DHMP}. The result \eqref{ub:extrlpinf} implies that the lower bound from Theorem~\ref{thm2} is best possible in the order of magnitude in $N$ for fixed dimension $d$.

\begin{rem}\rm
Although the optimality of the lower bound in Theorem~\ref{thm2} is shown by means of matching upper bounds on the star $L_p$ discrepancy we point out that in general the extreme $L_p$ discrepancy is really lower than the star $L_p$ discrepancy. An easy example is the van der Corput sequence $\cS^{{\rm vdC}}$ in dimension $d=1$, whose extreme $L_p$ discrepancy is of the optimal order of magnitude 
\begin{equation}\label{optvdclpex}
L_{p,N}^{{\rm extr}}(\cS^{{\rm vdC}}) \lesssim_p \sqrt{\log N}\quad \mbox{all $N\geq 2$ and all $p\in [1,\infty)$,}
\end{equation}
 but its star $L_p$ discrepancy is only of order of magnitude $\log N$ since 
\begin{equation}\label{exordvdc}
\limsup_{N \rightarrow \infty} \frac{L_{p,N}^{{\rm star}}(\cS^{{\rm vdC}})}{\log N}=\frac{1}{6 \log 2} \quad \mbox{ for all $p \in [1,\infty)$.}
\end{equation}
For a proof of \eqref{exordvdc} see, e.g., \cite{chafa,proat} for $p=2$ and \cite{pil04} for general $p$. A proof of \eqref{optvdclpex} can be given by means of a Haar series representation of the extreme $L_p$ discrepancy as given in \cite[Proposition~3, Eq.~(9)]{KP21}. One only requires good estimates for all Haar coefficients of the discrepancy function of the first $N$ elements of the van der Corput sequence, but these can be found in~\cite{KP2015}.  Employing these estimates yields after some lines of algebra the optimal order result \eqref{optvdclpex}. 
\end{rem}

\begin{rem}\rm
The periodic $L_p$ discrepancy is another type of discrepancy that is based on the class of periodic intervals modulo one as test sets; see \cite{HKP20,KP21}. Denote it by $L_{p,N}^{{\rm per}}$. The periodic $L_p$ discrepancy dominates the extreme $L_p$ discrepancy because the range of integration in the definition of the extreme $L_p$ discrepancy is a subset of the range of integration in the definition of the periodic $L_p$ discrepancy, as already noted in \cite[Eq.~(1)]{HKP20} for the special case $p=2$. Furthermore, it is well known that the periodic $L_2$ discrepancy, normalized by the number of elements of the point set, is equivalent to the diaphony, which was introduced by Zinterhof~\cite{zint} and which is yet another quantitative measure for the irregularity of distribution; see \cite[Theorem~1]{Lev} or \cite[p.~390]{HOe}. For $\cP=\{\bsx_0,\bsx_1,\ldots,\bsx_{N-1}\}$ in $[0,1)^d$ it is defined as $$F_N(\cP)=\left(\sum_{\bsh \in \ZZ^d} \frac{1}{r(\bsh)^2} \left| \frac{1}{N} \sum_{k=0}^{N-1} {\rm e}^{2 \pi \icomp \bsh \cdot \bsx_k}\right|^2\right)^{1/2},$$ where for $\bsh =(h_1,h_2,\ldots,h_d)\in \ZZ^d$ we set $r(\bsh)= \prod_{j=1}^d \max(1,|h_j|)$. Now, for every $p>1$ for every infinite sequence $\cS_d$ in $[0,1)^d$ we have for infinitely many $N \in \NN$ the lower bound
$$\frac{(\log N)^{d/2}}{N} \lesssim_{p,d} \frac{1}{N}\, L_{p,N}^{{\rm extr}}(\cS_d) \le \frac{1}{N}\, L_{p,N}^{{\rm per}}(\cS_d).$$ Choosing $p=2$ we obtain $$\frac{(\log N)^{d/2}}{N} \lesssim_{d} \frac{1}{N}\, L_{2,N}^{{\rm per}}(\cS_d)  \lesssim_d F_N(\cS_d)\quad \mbox{ for infinitely many $N \in \NN$.}$$ Thus, there exists a positive $C_d$ such that for every sequence $\cS_d$ in $[0,1)^d$ we have 
\begin{equation}\label{lb:dia}
F_N(\cS_d) \ge C_d \, \frac{(\log N)^{d/2}}{N} \quad \mbox{ for infinitely many $N \in \NN$.}
\end{equation}
This result was first shown by Pro{\u\i}nov~\cite{pro2000} by means of a different reasoning. The publication \cite{pro2000} is only available in Bulgarian; a survey presenting the relevant result is published by Kirk~\cite{kirk}. At least in dimension $d=1$ the lower bound \eqref{lb:dia} is best possible, since, for example, $F_N(\cS^{{\rm vdC}}) \lesssim \sqrt{\log N}/N$ for all $N \in \NN$ as shown in \cite{progro} (see also \cite{chafa,F05}). Note that this also yields another proof of \eqref{optvdclpex} for the case $p=2$. A corresponding result for dimensions $d>1$ is yet missing.
\end{rem}

\noindent {\bf Author's address:} Institute of Financial Mathematics and Applied Number Theory, Johannes Kepler University Linz, Austria, 4040 Linz, Altenberger Stra{\ss}e 69. Email: ralph.kritzinger@yahoo.de, friedrich.pillichshammer@jku.at


\begin{thebibliography}{99}

\bibitem{BC} 
J. Beck and W.W.L. Chen: Irregularities of Distribution. Cambridge University Press, Cambridge, 1987.

\bibitem{chafa} H. Chaix and H. Faure: Discr\'{e}pance et diaphonie en dimension un. Acta Arith. 63: 103--141, 1993.

\bibitem{C80} W.W.L. Chen: On irregularities of distribution. Mathematika 27: 153--170, 1981.
    
\bibitem{C83} W.W.L. Chen: On irregularities of distribution II. The Quarterly Journal of Mathematics 34: 257--279, 1983.

\bibitem{CS02} W.W.L. Chen and M.M. Skriganov: Explicit constructions in the classical mean squares problem in irregularities of point distribution.  J. Reine Angew. Math. 545: 67--95, 2002.

\bibitem{D56} H. Davenport: Note on irregularities of distribution. Mathematika 3: 131--135, 1956.

\bibitem{D14} J. Dick: Discrepancy bounds for infinite-dimensional order two digital sequences over $\FF_2$. J. Number Theory 136: 204--232, 2014.

\bibitem{DHMP} J. Dick, A. Hinrichs, M. Markhasin and F. Pillichshammer: Optimal $L_p$-discrepancy bounds for second order digital sequences. Israel J. Math. 221(1): 489--510, 2017. 

\bibitem{DP05b} J. Dick and F. Pillichshammer: On the mean square weighted $L_2$ discrepancy of randomized digital $(t,m,s)$-nets over $\mathbb{Z}_2$. Acta Arith. 117: 371--403, 2005.

\bibitem{DP14b} J. Dick and F. Pillichshammer: Explicit constructions of point sets and sequences with low discrepancy. In: Uniform Distribution and Quasi-Monte Carlo Methods, Radon Series on Computational and Applied Mathematics 15, De Gruyter, pp. 63--86, 2014.

\bibitem{DP14a} J. Dick and F. Pillichshammer: Optimal $\mathcal{L}_2$ discrepancy bounds for higher order digital sequences over the finite field $\FF_2$. Acta Arith. 162: 65--99, 2014.

\bibitem{Do84} N.M. Dobrovol'ski{\u\i}: An effective proof of Roth's theorem on quadratic dispersion. Akademiya Nauk SSSR i Moskovskoe Matematicheskoe Obshchestvo 39: 155--156, 1984; English translation in Russian Mathematical Surveys  39: 117--118, 1984.

\bibitem{F05} H. Faure: Discrepancy and diaphony of digital $(0,1)$-sequences in prime base. Acta Arith. 117.2: 125--148, 2005. 

\bibitem{Frolov} K.K. Frolov: Upper bound of the discrepancy in metric $L\sb{p}$, $2\leq p<\infty $. Doklady Akademii Nauk SSSR 252: 805--807, 1980.

\bibitem{H81} G. Hal\'asz: On Roth's method in the theory of irregularities of point distributions. Recent Progress in Analytic Number Theory, Vol. 2, pp. 79--94. Academic Press, London-New York, 1981.

\bibitem{HY00} F.J. Hickernell and R.-X. Yue: The mean square discrepancy of scrambled $(t,s)$-sequences. SIAM Journal on Numerical Analysis  38: 1089--1112, 2000.

\bibitem{HKP20} A. Hinrichs, R. Kritzinger, and F. Pillichshammer: Extreme and periodic $L_2$ discrepancy of plane point sets.  Acta Arith. 199.2: 163--198, 2021. 

\bibitem{HOe} A. Hinrichs and J. Oettershagen: Optimal point sets for quasi-Monte Carlo integration of bivariate periodic functions with bounded mixed derivatives. Monte Carlo and quasi-Monte Carlo methods, pp. 385--405, Springer Proc. Math. Stat., 163, Springer, [Cham], 2016. 

\bibitem{kirk} N.  Kirk: On Proinov's lower bound for the diaphony. Unif. Distrib. Theory 15(2): 39--72, 2020.

\bibitem{KP2015} R. Kritzinger, F. Pillichshammer, $L_p$-discrepancy of the symmetrized van der Corput sequence, Arch. Math. 104: 407--418, 2015.

\bibitem{KP21} R. Kritzinger and F. Pillichshammer: Point sets with optimal order of extreme and periodic discrepancy. Submitted for publication, 2021. arXiv.org 2109.05781

\bibitem{Lev}  V.F. Lev: On two versions of $L^2$-discrepancy and geometrical interpretation of diaphony. Acta Math. Hungar. 69(4): 281--300, 1995.

\bibitem{M15} L. Markhasin: $L_p$- and $S_{p,q}^rB$-discrepancy of (order $2$) digital nets. Acta Arith. 168: 139--159, 2015.

\bibitem{pil04} F. Pillichshammer: On the discrepancy of $(0,1)$-sequences. J. Number. Theory 104: 301--314, 2004. 
 
\bibitem{pro86} P.D. Proinov: On irregularities of distribution.  C. R. Acad. Bulgare Sci. 39: 31--34, 1986. 

\bibitem{pro2000}  P.D. Proinov: Quantitative theory of uniform distribution and integral approximation. University of Plovdiv, Bulgaria (2000). (In Bulgarian)

\bibitem{proat} P.D. Proinov and E.Y. Atanassov: On the distribution of the van der Corput generalized sequences. C. R. Acad. Sci. Paris S\'{e}r. I Math. 307: 895--900, 1988.

\bibitem{progro}  P.D. Proinov and V.S. Grozdanov: On the diaphony of the van der Corput-Halton sequence. J. Number Theory 30(1): 94--104, 1988.
 
\bibitem{Roth} K.F. Roth: On irregularities of distribution. Mathematika 1: 73--79, 1954.

\bibitem{R80} K.F. Roth: On irregularities of distribution. IV. Acta Arith. 37: 67--75, 1980.

\bibitem{S77} W.M. Schmidt: Irregularities of distribution X. Number Theory and Algebra, pp. 311--329. Academic Press, New York, 1977.

\bibitem{Skr89} 
M.M. Skriganov: Lattices in algebraic number fields and uniform distribution mod $1$. Algebra i Analiz 1: 207--228, 1989; English translation in Leningrad Mathematical Journal  1: 535--558, 1990.

\bibitem{Skr94} M.M. Skriganov: Constructions of uniform distributions in terms of geometry of numbers. Algebra i Analiz 6: 200--230, 1994; English translation in St. Petersburg Mathematical Journal  6: 635--664, 1995.

\bibitem{S06} M.M. Skriganov: Harmonic analysis on totally disconnected groups and irregularities of point distributions.
J. Reine Angew. Math. 600: 25--49, 2006.

\bibitem{zint} P. Zinterhof: \"Uber einige Absch\"atzungen bei der Approximation von Funktionen mit Gleichverteilungsmethoden (German). \"Osterr. Akad. Wiss. Math.-Naturwiss. Kl. S.-B. II 185: 121--132, 1976. 

\end{thebibliography}
\end{document}